\newcommand{\bbC}{{\mathbb{C}}}
\newcommand{\bbE}{{\mathbb{E}}}
\newcommand{\bbF}{{\mathbb{F}}}
\newcommand{\bbG}{{\mathbb{G}}}
\newcommand{\bbH}{{\mathbb{H}}}
\newcommand{\bbK}{{\mathbb{K}}}
\newcommand{\bbL}{{\mathbb{L}}}
\newcommand{\bbP}{{\mathbb{P}}}
\newcommand{\bbR}{{\mathbb{R}}}
\newcommand{\bbS}{{\mathbb{S}}}
\newcommand{\bbZ}{{\mathbb{Z}}}
\newcommand{\calC}{{\mathcal{C}}}
\newcommand{\calF}{{\mathcal F}}
\newcommand{\calG}{{\mathcal G}}
\newcommand{\calH}{{\mathcal H}}
\newcommand{\calK}{{\mathcal K}}
\newcommand{\calL}{{\mathcal L}}
\newcommand{\calQ}{{\mathcal Q}}
\newcommand{\calT}{{\mathcal T}}
\newcommand{\bdone}{{\boldsymbol{1}}}
\newcommand{\lb}{\label}
\newcommand{\ti}{\tilde  }
\newcommand{\wti}{\widetilde  }
\newcommand{\tr}{\text{\rm{Tr}}}
\newcommand{\bi}{\bibitem}
\newcommand{\beq}{\begin{equation}}
\newcommand{\eeq}{\end{equation}}
\newcommand{\ba}{\begin{align}}
\newcommand{\ea}{\end{align}}
\newcounter{smalllist}
\newcommand{\comm}[1]{}
\numberwithin{equation}{section}
\newtheorem{theorem}{Theorem}[section]
\newtheorem{proposition}[theorem]{Proposition}
\theoremstyle{definition}
\newtheorem*{remark}{Remark}
\newcommand{\jap}[1]{\langle #1 \rangle}
\newcommand{\norm}[1]{\lVert#1\rVert}
\begin{document}

\title[Periodic BC Trees]{Periodic Boundary Conditions for Periodic Jacobi Matrices on Trees}
\author[N.~Avni, J.~Breuer, G.~Kalai and B.~Simon]{Nir Avni$^{1,5}$, Jonathan Breuer$^{2,6}$, Gil Kalai$^{3,7}$ \\and Barry Simon$^{4,8}$}

\thanks{$^1$ Department of Mathematics, Northwestern University, Evanston, IL  E-mail: avni.nir@gmail.com}

\thanks{$^2$ Institute of Mathematics, The Hebrew University of Jerusalem, Jerusalem, 91904, Israel. E-mail: jbreuer@math.huji.ac.il.}

\thanks{$^3$ Institute of Mathematics, The Hebrew University of Jerusalem, Jerusalem, 91904, Israel. E-mail: gil.kalai@gmail.com}

\thanks{$^4$ Departments of Mathematics and Physics, Mathematics 253-37, California Institute of Technology, Pasadena, CA 91125, USA. E-mail: bsimon@caltech.edu.}

\thanks{$^5$ Research supported in part by NSF grant DMS-1902041.}

\thanks{$^6$ Research supported in part by Israel Science Foundation Grant No. 399/16}

\thanks{$^7$ Research supported in part by ERC advanced grant 834735}

\thanks{$^8$ Research supported in part by NSF grant DMS-1665526.}

\dedicatory{In celebration of the 100th birthday of Shmuel Agmon.}

\

\date{\today}
\keywords{Jacobi Matrices, Trees, Spectral Theory}
\subjclass[2020]{47B36, 47B15 20E08}

\begin{abstract}  We consider matrices on infinite trees which are universal covers of Jacobi matrices on finite graphs.  We are interested in the question of the existence of sequences of finite covers whose normalized eigenvalue counting measures converge to the density of states of the operator on the infinite tree.  We first of all construct a simple example where this convergence fails and then discuss two ways of constructing the required sequences: with random boundary conditions and through normal subgroups.
\end{abstract}

\maketitle

%%%%%%%%%%%%%%%%%%%%%%%%%%%%%%%%%%%%%%%%%%%%%%%%%%%%%%%%%%%%%%
\section{Introduction} \lb{s1}
%%%%%%%%%%%%%%%%%%%%%%%%%%%%%%%%%%%%%%%%%%%%%%%%%%%%%%%%%%%%%%

This paper is a contribution to the growing literature on periodic Jacobi matrices on trees \cite{ABS, BGVM, CSZtree, GVK}.  In particular, we refer the reader to \cite{ABS} whose notation and ideas we will follow; see that paper for the definitions from graph theory that we will use.  We will always associate a graph with the obvious topological space.

One starts out with a finite leafless graph, $\calG$, and a Jacobi matrix on that graph.  By this we mean a matrix with indices labelled by the vertices in the graph, whose diagonal elements are a function, $b$, on the vertices and off diagonal elements, which are only non-zero for pairs of vertices which are the two ends of some edge, with matrix elements determined by a strictly positive function, $a$, on the edges.  The universal cover, $\calT$, of $\calG$ is always a leafless tree.  There is a unique and natural lift of any Jacobi matrix, $J$, on $\calT$ to an operator, $H$, (still called a Jacobi matrix) on $\ell^2(\calT)$.  Because $H$ is invariant under a group of deck transformations on $\calT$ (see \cite{Hatch} or \cite[Section 1.7]{BCA} for the covering space language we exploit), we call $H$ a \emph{periodic Jacobi matrix}.

There are three big general theorems in the subject: (1) a result of Sunada \cite{Sun} (see also \cite{ABS}) on labelling of gaps in the spectrum that implies the spectrum of $H$ is at most $p$ bands where $p$ is the number of vertices in the underlying finite graph $\calG$, (2) a result of Aomoto \cite{AomotoPoint} stating that if $G$ has a fixed degree, then $H$ has no point spectrum (see also \cite{ABS, BGVM}), and (3) a result of Avni--Breuer--Simon \cite{ABS} that there is no singular spectrum because matrix elements of the resolvent are algebraic functions.  Besides a very few additional theorems, the subject at this point is mainly some interesting examples and lots of conjectures and questions.

A basic object is the \emph{density of states} (DOS), $dk$ (and the weight this measure assigns to $(-\infty,E)$, the \emph{integrated density of states} (IDS), $k(E)$). We fix a finite graph, $\calG$, with $p$ vertices and $q$ edges.  For each vertex, $j\in\calG$, the spectral measure for $H$ at vertex $r\in\calT$, $d\mu_ r$ is the same for all $r\in\calT$ with $\Xi(r)=j$, where $\Xi:\calT\to\calG$ is the covering map.  The DOS is defined by picking one $d\mu_r$ for each $j\in\calG$, summing over $j$ and dividing by $p$, the number of vertices in $\calG$, that is
\begin{equation}\label{1.1}
  dk(\lambda) = \frac{1}{p}\sum_{j\in \calG; r \text{ so that } \Xi(r)=j} d\mu_r(\lambda)
\end{equation}

In the one dimensional case, a fundamental fact \cite{AS83} is that the DOS can be computed as a limit of normalized eigenvalue counting of operators restricted to boxes with either periodic or free boundary conditions (we say one dimensional because we are discussing trees but the result in \cite{AS83} holds also on $\bbZ^\nu$).  In \cite{ABS}, it is proven that for any tree but the line, this result fails for the free boundary condition case and our goal in this paper is to explore the case of periodic boundary conditions.

Of course, it isn't quite clear what one should mean by periodic BC which is illuminated by the lego block picture. Before discussing that, we want to remind the reader about what \cite{ABS} calls the lego block picture of periodic Jacobi matrices on trees, an unpublished realization of Christiansen, Simon and Zinchenko.  In this view, the fundamental trees are $\calT_{2\ell}$, the homogenous degree $2\ell$ tree.  Any leafless finite graph, $\calG$ has as its universal cover, a tree which can be realized as some $\calT_{2\ell}$ with each vertex replaced by a finite tree! (see also \cite{GilA3, GilA2} for discussions of universal covers of non-regular graphs).  Indeed, if $\calG$, has $p$ vertices and $q$ edges, there is a maximal connected subtree, $\calF$, obtained by removing $\ell$ edges and it is then easy to see that the homotopy group of $\calG$ is $\bbF_\ell$, the free non-abelian group on $\ell$ generators, whose natural Cayley graph is $\calT_{2\ell}$.  In general we think of the $2\ell$ edges coming out of each vertex as labelled $e_1^+,\dots,e_\ell^+,e_1^-,\dots,e_\ell^-$ with the rule that each $e_j^\pm$ has to be connected to an $e_j^\mp$ edge of a neighboring graph.

Now view $\calG$ as the finite tree, $\calF$, with $\ell$ edges added but rather than removing these edges, one imagines cutting them and leaving $2\ell$ half edges $e_1^+,\dots,e_\ell^+,e_1^-,\dots,e_\ell^-$ dangling.  These are the lego blocks which we place one at each vertex of $\calT_{2\ell}$  and connect the dangling edges by the above rules.  Thus the Hilbert space for $\calH=\ell^2(\calT)$ on which $H$ acts is replaced by vector valued functions on $\ell^2(\calT_{2\ell};\bbC^p)$.  $H$ is now a block Jacobi matrix where the diagonal elements are $p\times p$ block Jacobi matrices obtained by restricting the original Jacobi matrix on $\calG$ to the subgraph $\calF$, i.e.\ dropping the $a$'s from the cut edges.  The off diagonal piece linking two neighboring vertices $v_1$ and $v_2$ is a rank one $p\times p$ matrix, which associated to the edge $e_j$ linking $v_1$ and $v_2$ in $\calT_{2\ell}$ has a single non-zero matrix element (namely the $\calG$ Jacobi parameter associated to $e_j$) at the vertices inside the copies of $\calF$ corresponding to the vertices $v_1$ and $v_2$.

Because of the lego representation, if we prove convergence of eigenvalue counting measure for some class of periodic BC operators for the case of scalar $\calT_{2\ell}$ but where these periodic operators respect $e_1^+,\dots,e_\ell^+,e_1^-,\dots,e_\ell^-$ labelling (see below after \eqref{1.6}), then automatically we have results for infinite trees built over general finite leafless graphs $\calG$.  Thus, henceforth, we will only discuss and state results for the scalar $\calT_{2\ell}$ case bearing in mind that these automatically imply the more general results.

Because all the measures we consider (i.e., normalized eigenvalue counting and DOS) have supports in some fixed bounded set (once we fix all the Jacobi parameters), to prove convergence of measures, it suffices to prove convergence of moments.  In this regard, we will need a graphical representation of the moments of the density of states.  If $d\mu_j$ is the spectral measure of a point $j\in\calT$, then
\begin{equation}\label{1.2}
  \int \lambda^k d\mu_j(\lambda) = \jap{\delta_j,H^k\delta_j}
\end{equation}
Expanding $H^k$, one sees that
\begin{equation}\label{1.3}
  \jap{\delta_j,H^k\delta_j}=\sum_{\omega\in W_{j,k}} \rho(\omega)
\end{equation}
where $W_{j,k}$ is the set of all ``walks'' of length $k$ starting and ending at site $j$, i.e. $\omega_1,\dots,\omega_{k+1} \in\calT$ where $\omega_1=\omega_{k+1}=j$ and for $m=1,\dots,k$, one has that either $\omega_{m+1}=\omega_m$ or $\omega_m$ and $\omega_{m+1}$ are neighbors in $\calT$ (i.e. two ends of a single edge).  Moreover
\begin{equation}\label{1.4}
  \rho(\omega) = \rho_1(\omega)\dots\rho_k(\omega)
\end{equation}
\begin{equation}\label{1.5}
  \rho_m(\omega) = \left\{
                     \begin{array}{ll}
                       b_{\omega_m}, & \hbox{ if } \omega_m=\omega_{m+1} \\
                       a_{(\omega_m,\omega_{m+1})} & \hbox{ if } \omega_m\ne\omega_{m+1}
                     \end{array}
                   \right.
\end{equation}

On the other hand, suppose we have a finite cover, $\calG_r$ of $\calG$ (of covering order $r$) and we let $H_r$ be the lift of $J$ to $\calG_r$, $n_r=\#(\calG_r)$ and $N_r$ the normalized eigenvalue counting measure for $H_r$ (later we'll sometimes use $N^{(r)}$), then
\begin{align}
  \int \lambda^k dN_r(\lambda) &= n_r^{-1} \tr(H_r^k) \nonumber \\
                               &= n_r^{-1} \sum_{j\in\calG_r} \jap{\delta_j,H_r^k\delta_j}  \nonumber \\
                               &= n_r^{-1} \sum_{j\in\calG_r} \sum_{\omega\in W_{j,k,r}} \rho(\omega) \lb{1.6}
\end{align}
where $W_{j,k,r}$ is defined like $W_{j,k}$ except that we require $\omega_m\in\calG_r$ instead of $\omega_m\in\calT$.

In the tree, the only paths that start and end at $j$ retrace where they have been so a little thought shows that difference between the average of \eqref{1.3} over a cell and \eqref{1.6} is due to the existence of simple closed loops in $\calG_r$.  Thus if there are no short closed loops in $\calG_r$ as $r\to\infty$, we expect that $dN_r$ will converge to $dk$.  Indeed, it suffices that there be few such closed path compared to $n_r$.

If the operator is acting on $\ell^2(\calT_{2\ell};\bbC^p)$ with matrix Jacobi parameters, there is still a random walk representation.  In \eqref{1.5} the $a$'s and $b$'s are replaced by matrices, the order in \eqref{1.4} matters and the right side of \eqref{1.4} is $\tr(\rho_1(\omega)\dots\rho_k(\omega))$.  All the arguments in later sections where we prove results for general period $1$ operators on $\ell^2(\calT_{2\ell})$ easily extend to matrix valued period $1$ Jacobi operators on $\ell^2(\calT_{2\ell};\bbC^p)$.  Then via the Lego representation we get results for general scalar periodic Jacobi matrices on trees.

One way to construct natural periodic boundary condition objects is to start with a fixed vertex in $\calT_{2\ell}$, most naturally, the identity after $\calT_{2\ell}$ is identified with $\bbF_\ell$, and look at $\Lambda_r$, the set of all vertices a distance at most $r$ from the centered point.  The boundary of this set (i.e. all points a distance equal to $r$ from the center), $\partial\Lambda_r$ has $(2\ell)(2\ell-1)^{r-1}$ points.  Each of them can be viewed as having $2\ell-1$ dangling half edges sticking out which have natural labels with one of $e_1^+,\dots,e_\ell^+,e_1^-,\dots,e_\ell^-$ missing.  We can form a natural cover  of the basic $\calG$ (which for this case is a graph, $\wti{\calG}_\ell$, with one vertex and $\ell$ self loops) by pairing each dangling $e_j^+$ with some dangling $e_j^-$.

For a natural way to do this, view $\calT_{2\ell}$ as the Cayley graph of $\bbF_\ell$, take the center of $\Lambda_r$ to be the identity in $\bbF_\ell$ so that $\Lambda_r$ is all words in $e_1^+,\dots,e_\ell^+,e_1^-,\dots,e_\ell^-$ (with the only relation being that $e_j^-$ is the inverse of $e_j^+$) of length at most $r$.  If $b=w_1\dots w_r\in\partial\Lambda_r$, then $\tilde{b}=w_1^{-1}\dots w_r^{-1}$ are distinct points in  $\partial\Lambda_r$ whose dangling edges match (i.e. for each $e_j^\pm$ coming out of $b$ where is an $e_j^\mp$ coming out of $\tilde{b}$) and we can form a cover, $\calG_r$, by joining together these dangling edges.  This is in some way the most natural kind of periodic BC object and in Section \ref{s2} we will see this has too many short closed loops and its normalized eigenvalue counting measures do \emph{not} converge to $dk$.  Our main result in Section \ref{s2} will be that if the dangling edges in $\Lambda_r$ are connected in a random manner, then the eigenvalue counting measures do converge to $dk$.  This is because we'll show such random graphs have very few small loops.  This result is related, of course, to the result of McKay \cite{McK} that the eigenvalue measures for Laplacians of large random graphs of fixed degree converge to the DOS of the Laplacian of a tree of the same degree.

There is, of course, another way of thinking about collections of larger and larger finite covers of a fixed graph, $\calG$. By the theory of covering spaces \cite{Hatch} the covers, $\calC$, of $\calG$ are associated to subgroups of the fundamental group of $\calG$ and $\calC$ is a finite cover if and only the subgroup is of finite index.  We'll sketch the ideas here with details in Section \ref{s3}.  Thus a sequence of periodic BC objects is a sequence of finite index subgroups $H_1, H_2, \dots$ of $\pi_1(\calG)$.  The naive notion of their going to infinity is that for any finite subset, $F$, of $\pi_1(\calG)$ not containing the identity, eventually, one has that $H_j\cap F$ is empty.  One might expect that is the same as saying there are no small closed loops but that is, in general, wrong (as can be seen by looking at the example at the start of Section \ref{s2}): the point is that the condition
\begin{equation*}
    \text{(condition K)}\quad \forall_{\text{finite }F\text{ not containing the identity}} H_j\cap F=\emptyset, \text{ eventually}
\end{equation*}
only implies the absence of small closed loops containing the base point of the cover.  For other points we need $hH_jh^{-1}\cap F=\emptyset$ for conjugates of $H$.  Thus a special role is played by covers, which we'll call \emph{homogenous covers}, associated to normal subgroups of $\pi_1(\calG)$ (we note that the subgroups associated to the covers of Section \ref{s2} are (once $r$ is large) never normal).  If all the $H_j$ are normal, we will see easily that condition K is enough to imply that eigenvalue counting measures converge to the DOS.  So the issue becomes the existence of sequences of normal subgroups of $\bbF_\ell$ that obey condition K.  This is a well known folk theorem that we'll discuss in Section \ref{s3} providing a second class of periodic BC objects for which the desired convergence result holds.

In summary, we construct two classes of periodic BC Hamiltonians for which the normalized eigenvalue counting measures converge to the tree DOS.  In Section \ref{s2}, we look at $\Lambda_r$ with random pairings of the dangling ends.  In Section \ref{s3} we construct homogeneous covers with this property.

\begin{remark} In general (unless the tree is of degree two), the spectrum of the Jacobi matrix on the finite graph is not contained in the spectrum of its lift to the tree \cite{SS}. The convergence of the eigenvalue counting measures of random covers to the density of states says in particular that these covers have few eigenvalues outside the spectrum of the tree.  Related to this, using Bordenave-Collins \cite{BordC}, it is possible to show that for the sequence of random covers (chosen uniformly of the set of all degree k covers) of growing degree, the Hausdorff distance between the added eigenvalues and the spectrum of the tree converges in probability to zero, a direct analog of McKay's result \cite{McK}.  That is there is a third collection of covers for which we can prove convergence of the normalized eigenvalue counting measure to the DOS of the tree.
\end{remark}

Shmuel Agmon is a giant in spectral theory whose innovations have long charmed and benefitted us.  It is a great pleasure to present him this bouquet on his $100^{th}$ birthday.

%%%%%%%%%%%%%%%%%%%%%%%%%%%%%%%%%%%%%%%%%%%%%%%%%%%%%%%%%%%%%%
\section{Random Centered BC} \lb{s2}
%%%%%%%%%%%%%%%%%%%%%%%%%%%%%%%%%%%%%%%%%%%%%%%%%%%%%%%%%%%%%%

In this section, we will fix $\ell\in\bbZ; \ell\ge 2$ and take $\calG$ to be the graph with one vertex and $\ell$ self loops so that the covering tree, $\calT_{2\ell}$, is the homogeneous tree of degree $2\ell$, which we can identify with the Cayley graph of $\bbF_\ell$, the free group on $\ell$ generators. We also fix a Jacobi matrix, $J$, on $\calG$. We'll discuss covers of $\calG$ where we can label the edges coming out from a vertex as $e_1^+,\dots,e_\ell^+,e_1^-,\dots,e_\ell^-$ with the rule that each $e_j^\pm$ has to be connected to an $e_j^\mp$ edge of a neighboring vertex of the cover.  As we explained in the introduction, if we prove results about a class of finite covers whose normalized eigenvalue counting measure converges to the tree DOS, then using lego blocks, there are automatically results about general graphs which we will not state explicitly (essentially by extending the results to vector valued functions and block Jacobi matrices).

We will first describe our general framework, then give the example mentioned in the introduction with too many short loops, next state our general result and finally describe its proof.

For the time being, we fix $r$ although eventually, we will use $r$ as a label on sequences where we take $r\to\infty$.  In $\calT_{2\ell}$, let $\Lambda_r$ be the set of all vertices a distance at most $r$ from the origin of $\calT_{2\ell}$ thought of as $\bbF_\ell$ so that $\Lambda_r$ is the all words in $e_1^+,\dots,e_\ell^+,e_1^-,\dots,e_\ell^-$ of length at most $r$ (with the relations that $e_j^-=(e_j^+)^{-1}$).  Let $M_r=(2\ell-1)^r$  so that
\begin{equation}\label{2.1}
  \#(\partial\Lambda_r)=\tfrac{2\ell}{2\ell-1}M_r; \qquad \#(\Lambda_r)= \tfrac{\ell M_r-1}{\ell-1}
\end{equation}
are comparable to $M_r$. For each vertex in $\partial\Lambda_r$ we imagine extra half edges labelled by all the labels among $e_1^+,\dots,e_\ell^+,e_1^-,\dots,e_\ell^-$ except for the label of the edge that connects that vertex to the interior of $\Lambda_r$. It is easy to see that for each $j$ and $\pm$ there are exactly $M_r$ $e_j^\pm$ dangling half vertices.  By a \emph{pairing}, we mean a bijective association of each $e_j^+$ half-edge to a $e_j^-$ half-edge (at the same or a different vertex in $\partial\Lambda_r$).  The set of all pairings we denote by $\calQ_r$.  If we fix a pairing, all other pairings are related by a permutation of each of the $e_j^-$ vertices, so the number of points in $\calQ_r$ is $(M_r!)^\ell$.  For each $q\in\calQ_r$, we get a graph, $\calG_q$ by adding to $\Lambda_r$ edges along the linked pairs and this graph is a cover of $\calG$.  We let $J_q$ be the associated Jacobi matrix on $\calG_q$ and let $N^{(r)}_q$ be the associated normalized eigenvalue counting measure.

In Section \ref{s1}, we defined a map $b\mapsto\ti{b}$ of $\Lambda_r$ which also maps $\partial\Lambda_r$ to itself (the square of the map is $\bdone$ and it leaves no point of $\partial\Lambda_r$ fixed).  Moreover, the map is such that if $\alpha\in\partial\Lambda_r$ is linked to the interior by $e_j^\pm$ then $\ti{\alpha}$ is linked to the interior by $e_j^\mp$.  We can thus define a natural element, $q_0$ of $Q_r$ by pairing all the dangling edges from each $\alpha$ to the matching dangling edges of $\ti{\alpha}$.

\begin{proposition} \lb{P2.1} For any choice of Jacobi parameters $J$ on $\calG$, the limit $\lim_{r\to\infty} \int\lambda^2 dN^{(r)}_{q_0}(\lambda)$ exists and is strictly large than the second moment of the associated infinite tree DOS.
\end{proposition}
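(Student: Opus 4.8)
The plan is to prove the sharper statement at the level of the second moment, using the walk expansion \eqref{1.6} with $k=2$. Since $\int\lambda^2\,dN^{(r)}_{q_0}=n_r^{-1}\sum_{v}\jap{\delta_v,H_{q_0}^2\delta_v}$ and, on the (multi)graph $\calG_{q_0}$, each summand equals $b^2+\sum_{w\ne v}\big(\sum_{e:\,v\sim_e w}a_e\big)^2$ and so depends only on the local edge structure at $v$, I would split $\Lambda_r=\Lambda_{r-1}\sqcup\partial\Lambda_r$ into interior and boundary vertices and evaluate the two contributions separately. The decisive geometric feature of trees (as opposed to $\bbZ^\nu$) is that the boundary is a fixed positive fraction of the ball: from \eqref{2.1} one gets $\#(\partial\Lambda_r)/\#(\Lambda_r)\to\tfrac{2(\ell-1)}{2\ell-1}$ and $\#(\Lambda_{r-1})/\#(\Lambda_r)\to\tfrac{1}{2\ell-1}$, so the boundary term does not wash out.

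The heart of the argument is to identify what $q_0$ does near the boundary. I claim that for each $b\in\partial\Lambda_r$ the natural pairing joins $b$ to its partner $\ti b$ by $2\ell-1$ parallel edges. Indeed, if $w_r=e_{j_0}^{s_0}$ is the last letter of $b$, then $b$ has $2\ell-1$ dangling half-edges (all labels except the one $e_{j_0}^{-s_0}$ leading to its parent), and $q_0$ pairs each of them with the matching dangling half-edge of $\ti b$; since $\ti b$ is missing exactly the complementary label $e_{j_0}^{s_0}$, the two sets of danglers match perfectly and all $2\ell-1$ forward neighbors of $b$ in the tree collapse onto the single vertex $\ti b$. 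Hence the only neighbors of $b$ in $\calG_{q_0}$ are its interior parent (one edge, weight $a_{j_0}$) and $\ti b$ (a multiple edge of total weight $a_{j_0}+2\sum_{j\ne j_0}a_j=2A-a_{j_0}$, writing $A=\sum_j a_j$), so that $\jap{\delta_b,H_{q_0}^2\delta_b}=b^2+a_{j_0}^2+(2A-a_{j_0})^2$. By contrast, at an interior vertex the $2\ell$ neighbors are distinct and $\jap{\delta_v,H_{q_0}^2\delta_v}=b^2+2\sum_j a_j^2$, which is exactly the tree value $\int\lambda^2\,dk$.

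To pass to the limit I would average over the boundary. By the symmetry of $\bbF_\ell$ under permuting and inverting generators, the last letter of $b$ is equidistributed over the $2\ell$ generators as $b$ ranges over $\partial\Lambda_r$, so each $j$ occurs as $j_0(b)$ for a fraction $1/\ell$ of the boundary vertices. Thus the interior average is the constant $\iota:=b^2+2S$ ($S:=\sum_j a_j^2$) and the boundary average is the constant $\beta:=b^2+\tfrac1\ell\sum_j\big(a_j^2+(2A-a_j)^2\big)$, both independent of $r$; therefore $\int\lambda^2\,dN^{(r)}_{q_0}=\tfrac{\#(\Lambda_{r-1})}{\#(\Lambda_r)}\iota+\tfrac{\#(\partial\Lambda_r)}{\#(\Lambda_r)}\beta$, the ratios converge, and the limit exists and equals $L=\tfrac{1}{2\ell-1}\iota+\tfrac{2(\ell-1)}{2\ell-1}\beta$. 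A short computation gives $\beta-\iota=\tfrac{2(\ell-1)}{\ell}(2A^2-S)$, so $L-\iota=\tfrac{4(\ell-1)^2}{\ell(2\ell-1)}(2A^2-S)$. Since the $a_j$ are strictly positive, $A^2=S+\sum_{i\ne j}a_ia_j>S$, whence $2A^2-S>0$ and $L>\iota$, which is precisely the claim that the limit strictly exceeds the tree second moment.

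The only genuinely delicate point, and the one I would be most careful about, is the bookkeeping that produces the multiplicity $2\ell-1$: one must read off from the definition of $q_0$ that all $2\ell-1$ outward half-edges of $b$ are identified with the single vertex $\ti b$, and then use the multigraph form of the walk expansion in which the matrix element $H_{b\ti b}$ is the \emph{sum} of the weights of the parallel edges (so that its square, rather than the sum of squares, enters the second moment). Once this multiplicity is pinned down, convergence of the volume ratios and the elementary inequality $2A^2>S$ finish the proof; the argument also makes transparent that the strictly positive excess is possible only because the tree boundary is macroscopically large.
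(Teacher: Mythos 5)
Your proof is correct, and its skeleton is the same as the paper's: split $\Lambda_r$ into interior vertices (which reproduce the tree value $b^2+2\sum_m a_m^2$ exactly) and boundary vertices, observe that $q_0$ joins each $b\in\partial\Lambda_r$ to its partner $\ti{b}$ by $2\ell-1$ parallel edges (one of label $j_0$, two of each label $m\ne j_0$), and note that the resulting extra length-two walks survive normalization because $\#(\partial\Lambda_r)/\#(\Lambda_r)$ has a strictly positive limit. Where you go beyond the paper is in the execution: the paper counts the extra walks globally, leaves the cross-term constant $c$ in \eqref{2.3} implicit, and only asserts positivity of the excess, whereas you compute the exact per-vertex moment $b^2+a_{j_0}^2+(2A-a_{j_0})^2$, use equidistribution of the last letter over the $2\ell$ signed generators to get an $r$-independent boundary average, and express the $r$-th moment as an exact convex combination of two constants, which yields the limit in closed form with the explicit gap $\tfrac{4(\ell-1)^2}{\ell(2\ell-1)}(2A^2-S)>0$. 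Your constants are, moreover, the correct ones: the paper's \eqref{2.3} contains two slips --- it asserts $M_r/n_r\to(2\ell-1)/2\ell$ although \eqref{2.1} gives $M_r/n_r\to(\ell-1)/\ell$, and it credits $2M_r$ extra walks of weight $a_m^2$ although a boundary vertex whose last letter has unsigned label $m$ has only a single $e_m$-edge to its partner and hence contributes no such walk, so the correct count is $\tfrac{4(\ell-1)}{2\ell-1}M_r$. Neither slip affects the truth of Proposition \ref{P2.1}, but your bookkeeping (including the careful treatment of parallel edges, where the matrix element is the sum of the edge weights so that its square, not the sum of squares, enters) silently repairs them.
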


\begin{proof}  Let $b$ be the Jacobi parameter for the vertex in $\calG$ and $a_1,\dots,a_\ell$ for the $\ell$ edges.  Since the full tree is fully translation invariant, the DOS is the spectral measure associated to any vertex so, by \eqref{1.2} and \eqref{1.3}, we see that
\begin{equation}\label{2.2}
  \int\lambda^2 dk(\lambda) = b^2+2\sum_{m=1}^{\ell} a_m^2
\end{equation}
since paths of length two that start and end at a vertex on the tree either stay at that vertex for two steps (giving $b^2$) or go out and back in one of the $2\ell$ edges $e_m^\pm$ giving each $a_m^2$ twice.

For all vertices in $\calG_{q_0}$ other than those on the boundary, these are the only paths of length $2$ so the contribution to \eqref{1.6} from sites $j\notin\partial\Lambda_r$ is as given on the right of \eqref{2.2}.  But there are additional paths of length two that start at $j\in\partial\Lambda_r$ go to $\ti{j}$ by one link and go back via another.  One gets $M_r$ factors of each $a_m^2$ by going to $\ti{j}$ along $e_m^+$ and coming back via $e_m^-$ and an additional $M_r$ factor by going to $\ti{j}$ along $e_m^-$ and coming back via $e_m^+$.  There is a different number, $C_r$, of terms that count up the number of vertices $j$ which have a pair of links between $j$ and $\ti{j}$ with $a_1$ on one link and $a_2$ on the other (counting multiplicities).  We will not make $C_r$ explicit except to note that $c\equiv \lim_{r\to\infty} C_r/n_r$ exists and is non zero. We also note that by \eqref{2.1}, $\lim_{r\to\infty} M_r/n_r =(2\ell-1)/2\ell$.  We conclude from \eqref{1.6} that
\begin{equation}\label{2.3}
  \lim_{r\to\infty} \int\lambda^2 dN^{(r)}_{q_0}(\lambda) = \frac{2\ell-1}{\ell}\sum_{m=1}^{\ell} a_m^2+ c\sum_{m\ne n} a_m a_n + \int\lambda^2 dk(\lambda)
\end{equation}
proving the claim.
\end{proof}

Thus, because of small closed loops, we can't expect the periodic BC eigenvalue counting measure to converge to the DOS for all possible pairs and we turn to random pairings.  We put normalized counting measure, $\Xi_r$, on $\calQ_r$ with associated probability, $\bbP_r$ and expectation, $\bbE_r$.  We want to consider sequences $(q_r)_{(1\le r < \infty)}\in\prod_{r=1}^{\infty}\calQ_r\equiv \calQ$ and put the product measure $\bigotimes_{j=1}^\infty \Xi_r$ on $\calQ$, so the $q_r$ are independent and randomly distributed.  Here is the main result of this section:

\begin{theorem} \lb{T2.2} For almost every choice $(q_r)\in\calQ$, we have that the normalized eigenvalue counting measures, $dN^{(r)}$, converges weakly to the DOS, $dk$.
\end{theorem}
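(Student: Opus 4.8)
The plan is to prove almost-sure convergence of every moment and then invoke the fact that a sequence of measures supported in a common compact set converges weakly as soon as all its moments converge. Since the Jacobi parameters are fixed and every $\calG_q$ is $2\ell$-regular, the operators $J_q$ are uniformly norm bounded, so all the $dN^{(r)}_q$ together with $dk$ are supported in one fixed interval $[-M,M]$. Thus it suffices to show that for each fixed integer $k$ the $k$-th moment $X_r:=\int\lambda^k\,dN^{(r)}_{q_r}(\lambda)$ converges to $m_k:=\int\lambda^k\,dk(\lambda)$ for almost every $(q_r)\in\calQ$; intersecting the resulting full-measure events over the countably many $k$ yields convergence of all moments, hence weak convergence, almost surely.

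First I would set up the comparison for a single radius $r$ and a single pairing $q$. By \eqref{1.6}, $X_r=n_r^{-1}\sum_{j\in\calG_q}\sum_{\omega\in W_{j,k,r}}\rho(\omega)$, while by homogeneity of $\calT_{2\ell}$ the DOS moment $m_k$ equals the length-$k$ closed-walk sum \eqref{1.3} rooted at any tree vertex. The key structural point is that $\calG_q$ is $2\ell$-regular with exactly the same local labeled structure as $\calT_{2\ell}$. Hence, if no cycle of length $\le k$ meets the ball of radius $k$ about $j$, then that ball is isomorphic as a labeled graph to the corresponding tree ball, and $W_{j,k,r}$ is in $\rho$-preserving bijection with the tree walks, so $\sum_{\omega\in W_{j,k,r}}\rho(\omega)=m_k$. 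Only vertices $j$ lying near a short cycle can contribute to $X_r-m_k$. Since each walk weight is bounded and there are at most $(2\ell)^k$ walks, this gives the \emph{deterministic} bound $|X_r-m_k|\le n_r^{-1}C_k\,D_r$, where $C_k$ depends only on $k$ and the Jacobi parameters and $D_r$ counts the vertices within distance $k$ of some cycle of length $\le k$; note $D_r\le C_k' S_r$, where $S_r$ is the number of cycles of length $\le k$ in $\calG_q$.

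The heart of the argument, and the step I expect to be the main obstacle, is the estimate $\bbE_r[S_r]=O(1)$ as $r\to\infty$. Because $\Lambda_r$ is a tree, every cycle must traverse at least one paired edge, whose endpoints lie in $\partial\Lambda_r$. For a cycle using $t$ such edges $(u_1,v_1),\dots,(u_t,v_t)$, one chooses $t$ boundary half-edges ``freely'' (order $M_r^t$ configurations once the bounded-length connecting tree paths and the closure constraint are accounted for, each consecutive $u_{i+1}$ being within distance $k$ of $v_i$, giving only $O(1)$ choices), while the probability that all $t$ prescribed matchings occur is of order $M_r^{-t}$, since within each label class the pairing is a uniform bijection of $M_r$ half-edges. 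Multiplying gives an $O(1)$ contribution for each $t\le k$, hence $\bbE_r[S_r]=O(1)$ and $\bbE_r[D_r]=O(1)$. This is precisely the ``few short loops'' phenomenon for random pairings, the analogue in our structured setting of the configuration-model estimate behind McKay's theorem \cite{McK}; the extra care is that the interior of $\Lambda_r$ is deterministic, so the counting must be localized at $\partial\Lambda_r$.

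Finally I would upgrade the expectation bound to almost-sure convergence. The deterministic bound together with Markov's inequality gives, for every $\veps>0$,
\[
\bbP_r\!\left(|X_r-m_k|>\veps\right)\le \bbP_r\!\left(D_r>\tfrac{\veps n_r}{C_k}\right)\le \frac{C_k\,\bbE_r[D_r]}{\veps\, n_r}=O(1/n_r).
\]
Since $n_r$ grows like $(2\ell-1)^r$ by \eqref{2.1}, these probabilities are summable in $r$, so the Borel--Cantelli lemma (applied along a sequence $\veps\downarrow 0$) yields $X_r\to m_k$ almost surely. Taking the countable intersection over $k$ then gives almost-sure convergence of all moments, and hence weak convergence of $dN^{(r)}$ to $dk$, completing the proof.
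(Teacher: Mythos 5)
Your proposal is correct, and while its overall skeleton (reduction to moments via uniform compact support, a deterministic walk-sum comparison, Markov plus Borel--Cantelli) matches the paper's, the key probabilistic estimate is done by a genuinely different route. The paper fixes a vertex $\alpha$ and bounds the probability $\bbP_r(B_{m,\alpha})\le T_m/M_r$ that some short loop passes near $\alpha$ by a \emph{switching} argument: given a bad pairing one swaps an offending matched edge with the matched edge at a far-away boundary vertex $x$; since any switched graph has a bounded number of preimages (at most $D_{2m}$) while the number of admissible $x$ grows like $M_r$, the bad set is small, and summing over $\alpha$ gives $\bbE_r\left|\int\lambda^m dN^{(r)}-\int\lambda^m dk\right|=O(1/M_r)$. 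You instead bound the \emph{expected number of short cycles} $\bbE_r[S_r]=O(1)$ by a first-moment computation in the configuration-model style behind McKay's theorem: a cycle using $t$ matched edges admits $O(M_r^t)$ configurations (the closure constraint killing one factor of $M_r$), each occurring with probability $O(M_r^{-t})$ because the pairing within each label class is a uniform bijection of $M_r$ half-edges. This yields the same order of error, $O(1/n_r)$ with $n_r\asymp M_r$ by \eqref{2.1}. Both arguments are elementary and self-contained; yours is the more standard random-graph computation and produces the global cycle count in one stroke, while the paper's switching argument uses only the uniformity of the measure on $\calQ_r$ (it never computes a matching probability) and delivers the per-vertex localization directly.

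One local repair is needed. Your claim that if no cycle of length $\le k$ meets the ball of radius $k$ about $j$ then that ball is isomorphic to the tree ball is false as stated: a cycle of length $k+2$ through $j$ lies entirely inside $B_k(j)$, destroying the isomorphism, yet satisfies your hypothesis. What your argument actually requires, and what is true, is the walk-level statement: any closed walk of length $k$ from $j$ that does not lift to a closed walk on $\calT_{2\ell}$ traces a connected subgraph with at most $k$ edges which must contain a simple cycle, necessarily of length $\le k$ and within distance $k$ of $j$; hence under your hypothesis every walk in $W_{j,k,r}$ lifts, giving $\sum_{\omega\in W_{j,k,r}}\rho(\omega)=m_k$. (Alternatively, define bad vertices using cycles of length $\le 2k+1$; your counting estimate is unaffected by the larger constant.) This is a fixable imprecision rather than a gap, and with it repaired your proof stands.
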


We will prove this by a sequence of steps:

\emph{Step 1.} Fix a positive integer $m$ and a site $\alpha\in\Lambda_r$.  Call $q\in\calQ_r$ \emph{$m, \alpha$-bad} if there is a simple closed loop in $\calG_{q}$ of length at most $2m$ all within distance $m$ (in $\calG_q$-distance) of $\alpha$. Let
\begin{equation}\label{2.3A}
  B_{m,\alpha} = \{q\,\mid\,q \text{ is }m,\alpha\text{-bad }\}
\end{equation}
We will prove that there is a constant $T_m$ so that for all $r$ and all $\alpha\in\Lambda_r$ we have that
\begin{equation}\label{2.4}
  \bbP_r(B_{m,\alpha}) \le T_m/M_r
\end{equation}

\emph{Step 2.} If $q\in\calQ_r$ is not $m, \alpha$-bad, then the set of $\beta\in\calG_q$ a distance at most $m$ from $\alpha$ is a truncated degree $2\ell$ tree centered at $\alpha$.  From this it follows that
\begin{equation}\label{2.5}
  \jap{\delta_\alpha,(J_q)^m\delta_\alpha} =  \int \lambda^m dk(\lambda)
\end{equation}

\emph{Step 3.} Prove that for each positive integer, $m$, there is a constant $U_m$ so that
\begin{equation}\label{2.6}
  \bbE_r\left(\left|\int \lambda^m dN^{(r)}_{q_r}(\lambda) - \int \lambda^m dk(\lambda)\right|\right) \le U_m/M_r
\end{equation}

\emph{Step 4.} Prove that for almost every choice $(q_r)\in\calQ$, we have that
\begin{equation}\label{2.7}
  \lim_{q\to\infty} \int \lambda^m dN^{(r)}_{q_r}(\lambda) =  \int \lambda^m dk(\lambda)
\end{equation}
and deduce the theorem from this.

\begin{proof} [Proof of Step 1] This is the most involved argument in the paper and is critical. We use the word ``finitely'' to indicate that a quantity only depends on $m$ but can be chosen independently of $r$. The idea is that for each $x\in\partial\Lambda_r$ in some bad graph, $\calG$, finitely far from $\alpha$ we can get swap a extra edge in a small loop with an edge from $x$ to get a new graph $\calK$.  One shows that the number of times a given $\calK$ is bounded (independently of $r$) while the number of choices for $x$ grows like $M_r$ which yields a bound like \eqref{2.4}.

We will fix $r, \alpha\in\Lambda_r,$ and $m$ and call the half-edge $2\ell$ possibilities, $e_j^\pm$, colors labelled by $\gamma$. We will suppose $r$ is fairly large (to be specified later). Given, $p,s\in\partial\Lambda_r$, a $\Lambda_r$-distance at most $2m$ from each other, we let
\begin{align}\label{2.7A}
  B_{p,s,\gamma} &= \{q\in\calQ_r\,\mid\,\text{there is a path of length at most } \nonumber\\
                 &\null\qquad\quad 2m\text{ from } p\text{ to } s\text{ leaving } p\text{ along half edge }\gamma\}
\end{align}
where $\gamma$ is one of the $2\ell-1$ half edges that does not link $p$ to the interior of $\Lambda_r$.

Since any simple closed path within distance $m$ of $\alpha$ in some $\calG_q$ must contain a link not in $\Lambda_r$, we can label such paths be the first and last times the path lies in $\partial\Lambda_r$ and the direction it leaves that first point and we see that
\begin{equation}\label{2.7B}
  B_{m,\alpha} \subset \bigcup_{p,s,\gamma} B_{p,s,\gamma}
\end{equation}
where $p,s$ are points in $\partial\Lambda_r$ a $\Lambda_r$-distance at most $2m$ of $\alpha$ (since these points are linked to $\alpha$ by a path entirely in $\Lambda_r$). An over count of the number of such points is $2\ell(2\ell-1)^{2m-1}$).  In \eqref{2.7B}, $\gamma$ is one of the $2\ell-1$ half edges that does not link $p$ to the interior of $\Lambda_r$.  Thus
\begin{equation}\label{2.7C}
  \text{the number of terms in }\eqref{2.7B} \le (2\ell-1)[2\ell(2\ell-1)^{2m-1}]^2
\end{equation}

We will be considering a variety of graphs, $\calG$ with degree at most $2\ell$ at each vertex.  $d_\calG$ will denote distance on that graph, i.e. shortest path between two vertices.  If $\calG$ and $\calH$ have the same vertices but $\calH$ has more edges, clearly
\begin{equation}\label{2.7D}
  d_\calH(x,y) \le d_\calG(x,y)
\end{equation}
Let $D_r=\#(\Lambda_r)$ given by \eqref{2.1}.  Then if $\calG$ has maximum degree $2\ell$, it is easy to see that the number of points a distance at most $r$ from a fixed $x\in\calG$ is bounded by $D_r$.

Now suppose $\calG\in B_{p,s,\gamma}$ and suppose that $x\in\partial\Lambda_r$ with $d_\calG(x,s)> 2m+1$ and so that $x$ is not linked to the interior by a vertex whose link to $x$ is $\gamma$. The distance condition implies that $x$ is not linked to $p$ by the vertex labelled $\gamma$.  So we let $y\ne x$ be the vertex linked to $p$ by the edge labelled $\gamma$ coming out of $p$ and let $z\ne p$ be the vertex whose edge labelled by gamma has $x$ at the other end.  Let $\ti{\calG}$ be $\calG$ with the gamma edges coming out of $p$ and $z$ removed (so if say $\gamma=e_1^+$, we have that $x$ and $y$ have dangling $e_1^-$ edges).  By the distance condition, the path length at most $2m$ from $p$ to $s$ does not include $x$ or $z$ and thus
\begin{equation}\label{2.7E}
  d_{\ti{\calG}}(y,s) \le 2m
\end{equation}

Let $\tau_x(\calG)$ be the graph obtained from $\ti{\calG}$ by linking $p$ to $x$ and $z$ to $y$ by the edges labelled $\gamma$ coming out of the first points (i.e $\tau_x(\calG)$ is obtained from $\calG$ by switching end points the gamma edges coming out of $p$ and $z$).  Suppose that $\calK=\tau_x(\calG)$ for some $\calG\in B_{p,s,\gamma}$ and some $x$.  The key observation is the bound on how many $\calG$'s there can be with $\tau_x(\calG)=\calK$.  Clearly, to recover $\calG$ from $\calK$, it suffices to know $p, x, z, y$.  $p$ is fixed and $x$ is the vertex linked to it in $\calK$ by the $\gamma$ edge.  If two $\calG$'s with $\tau_x(\calG)=\calK$ have the same $y$ they are equal.  By \eqref{2.7E} and \eqref{2.7D}, $d_\calK(y,s) \le 2m$ so we conclude
\begin{equation}\label{2.7F}
  \text{the number of }\calG\text{ with }\tau_x(\calG)=\calK \le D_{2m}
\end{equation}

Instead of counting numbers of possibilities it is simpler to divide by the number of points in $\calQ_r$, and note that for each $\calG\in B_{p,s,\gamma}$ we can form $\tau_x(\calG)$ for at least $M_r-D_{2m+1}$ points and see that (for any single graph $\calL$, we have that $\bbP_r(\calL)=1/\#(\calQ_r)$)
\begin{equation}\label{2.7G}
  (M_r-D_{2m+1})\bbP_r(\calG) \le \sum_{K\,\mid\,\exists_x\text{ with }\tau_x(\calG)=K} \bbP_r(\calK)
\end{equation}
If we sum over all $\calG\in B_{p,s,\gamma}$ and use \eqref{2.7F} and the fact that $\sum_\calK \bbP_r(\calK)=1$ to conclude that
\begin{equation}\label{2.8H}
  (M_r-D_{2m+1})\bbP_r(B_{p,s,\gamma}) \le D_{2m}
\end{equation}

If $r$ so large that $M_r \ge 2D_{2m+1}$ we conclude using \eqref{2.7B} and \eqref{2.7C} that for such $r$ we have that
\begin{equation}\label{2.7I}
  \bbP_r(B_{m,\alpha}) \le \frac{(2\ell-1)[2\ell(2\ell-1)^{2m-1}]^2 2 D_{2m}}{M_r}
\end{equation}
which proves \eqref{2.5} with
\begin{equation}\label{2.7J}
   U_m= \max((2\ell-1)[2\ell(2\ell-1)^{2m-1}]^2 2 D_{2m}, 2D_{2m+1})
\end{equation}
since then, when $r$ is small, the right side is bigger than 1.
\end{proof}

\begin{remark} There is an alternate proof of this step that we have that relies on McKay's result.  While it is somewhat shorter, we decided to use this result because it is self-contained and conceptually simple.
\end{remark}

\begin{proof} [Proof of Step 2] Suppose that $\calG_q$ has no simple closed loop of size no more than $2m$ all of whose sites are within distance $m$ of $\alpha$.  Since it is a cover, every vertex in $\calG_q$ has degree $2\ell$.  All of the $2\ell$ edges coming out of $\alpha$ must have second ends different from $\alpha$ (to avoid a closed loop of length $1$) and from each other (to avoid a closed loop of length $2$).  Each of the $2\ell$ vertices a distance one from $\alpha$ have $2\ell-1$ edges coming out besides the one linking to $\alpha$.  Those edges can't have a distance one vertex as their other end (to avoid loops of length $1$ or $3$) and must have all different second vertices (to avoid loops of length $2$ or $4$) so there are $2\ell(2\ell-1)$ vertices a distance $2$ from $\alpha$.  Repeating this shows that the vertices in $\calG_r$ a distance at most $m$ from $\alpha$ is exactly the truncated tree centered at $\alpha$ as claimed.

Since this implies all walks of length $m$ starting and $\alpha$ are the same as would be on an infinite tree starting at $\alpha$, we obtain \eqref{2.5} from \eqref{1.3}.
\end{proof}

\begin{proof} [Proof of Step 3] Fix $\alpha$.  Let $f(q)=\jap{\delta_\alpha,(J_q)^m\delta_\alpha} -  \int \lambda^m dk(\lambda)$.  It is easy to see that $\norm{J_q}\le |b|+2\sum_{s=1}^{\ell}a_s \equiv \Gamma$.  It follows (ignoring possible cancelling from the minus sign) that for all $q$, one has that $|f(q)|\le 2\Gamma^m$.  Since $f$ vanishes off the set on the left side of \eqref{2.4}, we see that
\begin{equation}\label{2.8}
  \bbE_r(|f|) \le \norm{f}_\infty \bbP_r(q\,\mid\,q \text{ is }m,\alpha\text{-bad }) \le 2\Gamma^m T_m/M_r
\end{equation}
Summing over $\alpha$ and dividing by the number of $\alpha$ yields \eqref{2.6} with $U_m=2\Gamma^m T_m$.
\end{proof}

\begin{proof} [Proof of Step 4] By \eqref{2.6} and Markov's inequality
\begin{equation}\label{2.9}
  \bbP_r\left(\left|\int \lambda^m dN^{(r)}_{q_r}(\lambda) - \int \lambda^m dk(\lambda)\right|\ge M_r^{-1/2}\right) \le U_m/M_r^{1/2}
\end{equation}

Since $M_r$ grows exponentially in $r$, we have that
\begin{equation}\label{2.10}
   \sum_{r=1}^{\infty}  \bbP_r\left(\left|\int \lambda^m dN^{(r)}_{q_r}(\lambda) - \int \lambda^m dk(\lambda)\right|\ge M_r^{-1/2}\right) < \infty
\end{equation}
Thus, by the Borel-Cantelli lemma \cite[Theorem 7.2.1]{RA}, for a.e. $q\in\calQ$, we have that eventually $\left|\int \lambda^m dN^{(r)}_{q_r}(\lambda) - \int \lambda^m dk(\lambda)\right|\le M_r^{-1/2}$ so that a.e., any given moment of $dN^{(r)}_{q_r}$ converges to that moment of $dk$.  It follows that a.e., we have convergence for all moments.  Since there is an apriori compact set in $\bbR$ that supports all the measures, the Weierstrass density theorem \cite[Theorem 2.4.1]{RA} implies weak convergence a.e.
\end{proof}

\begin{remark}  The relation between the spectrum of Laplacians of finite graphs and that of the covering tree is a central theme in graph theory and is related to the notions of expander graphs, and Ramanujan graphs. Analysis of sizes of cycles for random graphs has a crucial role in proving spectral properties for random regular graphs and especially expansion properties and relatives of the Ramanujan properties (see Hoori, Linial, Wigderson's survey of expander graphs \cite{GilA2} and Friedman \cite{GilA1}). It could be of interest to extend notions related to expander graphs based on the Laplacian to Jacobi operators and various Schr\"{o}dinger operators. In particular it would be interesting to extend  to more general operators the Alon-Bopanna theorem (see \cite{GilA4} or \cite[Section 5.2]{GilA2}) that asserts that the spectrum of regular graphs is controlled by the spectrum of the covering tree.
\end{remark}

%%%%%%%%%%%%%%%%%%%%%%%%%%%%%%%%%%%%%%%%%%%%%%%%%%%%%%%%%%%%%%
\section{Homogeneous BC} \lb{s3}
%%%%%%%%%%%%%%%%%%%%%%%%%%%%%%%%%%%%%%%%%%%%%%%%%%%%%%%%%%%%%%

In this section, we will identify finite covers of the basic graph $\calG$ with $1$ vertex and $\ell$ self loops with subgroups of $\bbF_\ell$ and use this to find sequences of finite covers whose normalized eigenvalue counting measures converge to the DOS (distinct from the examples in Section \ref{s2}).  By using the lego block representation one could then construct such sequences of finite covers for the Jacobi matrix on any finite leafless graph.

We quickly recall some basics of covering spaces for this situation.  We can view the universal cover as the Cayley graph $\bbF_\ell$.  If $\calC$ is any cover of $\calG$, then there is a covering map $\pi:\bbF_\ell \to \calC$ and if $\bdone$ is the identity in $\bbF_\ell$ and we let $v_0=\pi(\bdone)$, then $H\equiv \pi^{-1}[v_0]$ is precisely those $h\in\bbF_\ell$ so that the simple path from $\bdone$ to $h$ is pushed by $\pi$ to a closed curve.  This easily implies that $H$ is a group and that $\pi$ from $H$ to the just mentioned closed path is an isomorphism of $H$ to the fundamental group, $\pi_1(\calC,v_0)$.  Thus there is a 1-1 correspondence between subgroups of $\bbF_\ell$ and equivalence classes of covers of $\calG$.  The finite covers correspond precisely to subgroups, $H$, of finite index.

We are interested in sequences of finite covers, $\{\calC_n\}_{n=1}^\infty$, which in some sense converge to $\bbF_\ell$.  One notion of this that the corresponding subgroups $H_n$ have the property that any given $h\ne\bdone$ lies in only finitely many $H_n$.  We'll call the corresponding covering Jacobi matrices \emph{a sequence of periodic BC objects converging to infinity}.  We are especially interested in the case where each $\calC_{n+1}$ is a finite cover of $\calC_n$ for all $n$, which we call \emph{a tower of periodic BC objects converging to infinity} (if it indeed converges to infinity). We have a tower if and only if the sequence $H_n$ is nested, i.e. $H_n\subset H_{n+1}$ in which case convergence to infinity is equivalent to  $\cap H_n = \{\bdone\}$.

As we've seen, to get convergence of the normalized eigenvalue counting measures to the DOS we need very few small loops.  At first sight, one might think that is automatic when the covers converge to infinity since $H$ is connected to closed loops through $v_0$ so a condition like $\cap H_n = \{\bdone\}$.  Indeed, by the random walk representation, this implies that $\jap{\delta_{v_0}, J_{\calC_n}^m\delta_{v_0}}$ converges to the $m$th moment of the DOS.  But, of course, it can't always be true for the normalized eigenvalue counting measures since all the periodic BC operators we discussed in Section \ref{s2} converge to infinity and by Proposition \ref{P2.1} we do not have convergence of the normalized counting measures for the sequence of $q_0$'s!

A little thought shows that if $v_1=\pi(g)$ for some $g\in\bbF_\ell$ and $\calC$ corresponds to the subgroup, $H$, then the paths through $v_1$ moved to $\bdone$ are given by $gHg^{-1}$. In terms of measuring how big the closed paths are, that might seem harmless, but we note that if, for example, $\ell=2$ and $a, b$ are generators of $\bbF_2$, then $(ab)^na(ab)^{-n}=h$ is distance $2n+1$ from $\bdone$, while if $g=(ab)^{-n}$, then $ghg^{-1}=a$ is close to $\bdone$.  The moral is that the absence of small loops through $\bdone$ does not imply the same for all points.  But, if $gHg^{-1}=H$, then no small loops through $v_0$ implies no small loops through $v_1$!  This suggests that finite covers associated to normal subgroups should be especially interesting.  We will call the cover associated to such a normal subgroup a \emph{homogeneous cover} for reasons that will become clear in a moment.

If $\calC$ is a homogeneous cover associated to  a normal subgroup $H$, then $\calC$ is the Cayley graph of the quotient group $G/H$.  In particular by group multiplication $G/H$ acts freely and transitively on $\calC$ by an action which preserves the Cayley links (so $\calC$ looks the same from any point which is why we call it homogeneous).  In particular, $\jap{\delta_{v}, J_{\calC_n}^m\delta_{v}}$ is independent of $v$, so we conclude that

\begin{theorem} \lb{T3.1} If $J_n$ a sequence of periodic BC objects converging to infinity which are all operators on homogenous covers, then its normalized eigenvalue counting measures converge to the DOS.
\end{theorem}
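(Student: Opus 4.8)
The plan is to prove that every moment of $dN^{(n)}$ converges to the corresponding moment of $dk$; since all the measures are supported in a common compact subset of $\bbR$, this yields weak convergence by the Weierstrass/moment argument already used in Step 4 of Section \ref{s2}. Fix an integer $m\ge 1$. The first step is to use homogeneity to collapse the trace. By the random walk representation \eqref{1.6},
\begin{equation*}
  \int \lambda^m\, dN^{(n)}(\lambda) = \#(\calC_n)^{-1}\sum_{v\in\calC_n} \jap{\delta_v, (J_{\calC_n})^m \delta_v}.
\end{equation*}
Because $\calC_n$ is a homogeneous cover, $G/H_n$ acts freely and transitively on $\calC_n$ preserving the Cayley links (and hence the Jacobi parameters), so $\jap{\delta_v,(J_{\calC_n})^m\delta_v}$ is independent of $v$. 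The average therefore equals the single quantity $\jap{\delta_{v_0},(J_{\calC_n})^m\delta_{v_0}}$ at the base vertex $v_0=\pi(\bdone)$. On the tree side, full translation invariance of $\calT_{2\ell}$ gives $\int\lambda^m\,dk = \jap{\delta_{\bdone}, H^m\delta_{\bdone}}$, where $H$ is the lifted operator on $\ell^2(\calT_{2\ell})$. Thus the whole problem reduces to comparing two diagonal matrix elements.

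The second step is to compare these two quantities through the walk expansion \eqref{1.3}–\eqref{1.5}. Both are sums of weights $\rho(\omega)$ over closed walks of length $m$: over $W_{\bdone,m}$ on the tree, and over closed walks at $v_0$ in $\calC_n$. The covering map $\pi$ lifts any closed walk at $v_0$ uniquely to a walk in $\bbF_\ell$ starting at $\bdone$, and since the Jacobi parameters on $\calC_n$ are the $\pi$-pullbacks of those on $\calT_{2\ell}$, the weight $\rho(\omega)$ is preserved under this lift. A walk in $\calC_n$ closes up at $v_0$ exactly when its lift terminates in the fiber $\pi^{-1}(v_0)=H_n$; conversely a length-$m$ walk starting at $\bdone$ on the tree whose endpoint lies in $H_n$ projects to a closed walk at $v_0$. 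Hence the discrepancy between the two moments is accounted for precisely by those closed walks whose lift ends at a \emph{nontrivial} element of $H_n$.

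The third step uses convergence to infinity (condition K) to eliminate this discrepancy. The lift of a length-$m$ walk starting at $\bdone$ ends at a reduced word of length at most $m$, hence in the finite set $F_m\subset\bbF_\ell$ of elements of word length $\le m$. Since the sequence converges to infinity, each of the finitely many nontrivial elements of $F_m$ lies in only finitely many $H_n$, so there is an $N(m)$ with $H_n\cap F_m=\{\bdone\}$ for all $n\ge N(m)$. For such $n$ the only closed walks of length $m$ at $v_0$ are those whose lift returns exactly to $\bdone$, and these are in weight-preserving bijection with $W_{\bdone,m}$. Therefore $\jap{\delta_{v_0},(J_{\calC_n})^m\delta_{v_0}}=\jap{\delta_{\bdone},H^m\delta_{\bdone}}$ for all $n\ge N(m)$, so the $m$th moments agree eventually and in particular converge. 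Letting $m$ range over all positive integers gives convergence of every moment, and weak convergence of $dN^{(n)}$ to $dk$ follows.

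The step I expect to be the main obstacle is the lifting correspondence in the middle paragraph: making precise that closed walks at $v_0$ in $\calC_n$ are in weight-preserving bijection with tree walks whose endpoint lies in the fiber $H_n$, and that a walk of length $\le m$ can only reach an element of word length $\le m$. Once this is nailed down, condition K applies to the finite set $F_m$ and kills all the extra closed walks; the reduction to moments and the homogeneity argument are routine by comparison.
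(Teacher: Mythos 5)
Your proposal is correct and follows essentially the same route as the paper: homogeneity (the free transitive action of $G/H_n$ on the Cayley graph of the quotient) collapses the trace to the single diagonal entry at $v_0$, and then condition K, via the walk-lifting correspondence between closed walks at $v_0$ and tree walks ending in $H_n$, forces the $m$th moments to agree with the tree moments for all large $n$. The only difference is that you spell out the lifting bijection and the finite set $F_m$ explicitly, which the paper compresses into the remark that ``by the random walk representation'' convergence to infinity implies $\jap{\delta_{v_0}, J_{\calC_n}^m\delta_{v_0}}$ converges to the $m$th moment of the DOS.
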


Of course, for this to be interesting, there have to exist such sequences so the following is interesting

\begin{theorem} \lb{T3.2} For any $\ell$ there exist nested sequences of finite index normal subgroups of $\bbF_\ell$ and so towers of homogenous covers converging to infinity.  In particular, for these towers, we get associated Jacobi matrices whose normalized eigenvalue counting measures converge to the DOS.
\end{theorem}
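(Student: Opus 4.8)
The plan is to reduce Theorem~\ref{T3.2} entirely to the \emph{residual finiteness} of $\bbF_\ell$, packaging the tower so that normality and nestedness come for free. Concretely, for each $n$ let $H_n$ be the intersection of all subgroups of $\bbF_\ell$ of index at most $n$. Because $\bbF_\ell$ is finitely generated there are only finitely many such subgroups (a subgroup of index $j\le n$ is the stabiliser of a point in a transitive action of $\bbF_\ell$ on $\{1,\dots,j\}$, and such an action is determined by the $\ell$ images of the generators in $S_j$, of which there are finitely many), so $H_n$ is a finite intersection of finite index subgroups and hence itself of finite index. Any automorphism of $\bbF_\ell$ permutes the finite set of index-$\le n$ subgroups and therefore fixes $H_n$, so each $H_n$ is characteristic, in particular normal; and clearly $H_{n+1}\subseteq H_n$. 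Thus the $H_n$ define a tower of homogeneous covers $\calC_n$ with $\calC_{n+1}$ a finite cover of $\calC_n$, and by Theorem~\ref{T3.1} it only remains to check that this tower converges to infinity, i.e.\ that $\bigcap_n H_n=\{\bdone\}$. Since every finite index subgroup contains a finite index normal subgroup (its core) and conversely, $\bigcap_n H_n$ is exactly the intersection of all finite index subgroups, so $\bigcap_n H_n=\{\bdone\}$ is precisely the assertion that $\bbF_\ell$ is residually finite.

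The heart of the matter is therefore the separation statement: given a reduced word $\bdone\neq w\in\bbF_\ell$, I must produce a homomorphism from $\bbF_\ell$ onto a finite group that does not kill $w$. I would do this by an explicit permutation representation, in the self-contained spirit of the Step~1 remark. Write $w=y_1y_2\cdots y_n$ as a reduced word with each $y_k\in\{x_1^{\pm1},\dots,x_\ell^{\pm1}\}$ (the $x_j$ being the free generators), and work on the $n+1$ symbols $\{0,1,\dots,n\}$. For each generator $x_j$ I define a partial injection $\sigma_j$ by demanding that reading the letters of $w$ in order trace out the path $0\to1\to\cdots\to n$; explicitly, a letter $y_k=x_j$ imposes $\sigma_j(k-1)=k$ and a letter $y_k=x_j^{-1}$ imposes $\sigma_j(k)=k-1$. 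Extending each $\sigma_j$ arbitrarily to a genuine permutation of $\{0,\dots,n\}$ gives a homomorphism $\phi\colon\bbF_\ell\to S_{n+1}$ for which, by construction, $\phi(w)$ carries $0$ to $n\neq0$; hence $\phi(w)\neq\mathrm{id}$ and $N:=\ker\phi$ is a finite index normal subgroup with $w\notin N$, establishing residual finiteness.

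The one point needing genuine care --- the main obstacle --- is that the prescriptions above really do define \emph{single-valued} and \emph{injective} partial maps $\sigma_j$; everything else (finiteness of index, normality of $\ker\phi$, extension of a partial injection of a finite set to a permutation) is automatic. Here the reducedness of $w$ is exactly what is used: a clash in the definition of some $\sigma_j$ would require two occurrences $y_k,y_m$ of $x_j^{\pm1}$ with either a common input or a common output, and a short case check shows this forces $m=k\pm1$ with opposite exponents, i.e.\ an adjacent cancelling pair $x_jx_j^{-1}$ or $x_j^{-1}x_j$, contradicting that $w$ is reduced. I would verify these few cases and no more. Should one prefer to avoid the combinatorics, residual finiteness of $\bbF_\ell$ also follows from the embedding $\bbF_\ell\hookrightarrow\bbF_2\hookrightarrow\mathrm{SL}_2(\bbZ)$ together with reduction modulo integers, but the permutation construction keeps Section~\ref{s3} self-contained.
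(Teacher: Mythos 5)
Your proposal is correct, but it takes a genuinely different route from the paper's. The paper proves Theorem~\ref{T3.2} by leaving $\bbF_\ell$ and going into a matrix group: Proposition~\ref{P3.3} realizes $\bbF_\ell$ as a finite index subgroup of $\bbF_2$ (by a covering-space argument on a degree $4$ graph), Proposition~\ref{P3.4} realizes $\bbF_2$ inside $\bbS\bbL(2,\bbZ)$ as the Sanov subgroup, and Proposition~\ref{P3.5} exhibits the nested normal tower in $\bbS\bbL(2,\bbZ)$ as the congruence subgroups of matrices $\equiv\bdone \bmod 2^n$; the tower in $\bbF_\ell$ is then obtained by intersection, using the observation that a nested sequence of finite index normal subgroups with trivial intersection in a group restricts to such a sequence in any subgroup. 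You instead stay entirely inside $\bbF_\ell$: your tower $H_n=\bigcap\{K : [\bbF_\ell:K]\le n\}$ is automatically of finite index (there are only finitely many such $K$), characteristic and hence normal, and nested, so the whole theorem reduces to residual finiteness of $\bbF_\ell$, which you prove by the standard explicit permutation-representation argument; you correctly identify the only delicate point (single-valuedness and injectivity of the partial maps $\sigma_j$, which is exactly where reducedness of $w$ enters) and the case check you sketch does close it. What the paper's route buys is concreteness and thematic fit --- explicit integer matrices, explicit congruence subgroups, and another use of covering spaces; what yours buys is self-containedness (no facts about $\bbS\bbL(2,\bbZ)$ or freeness of the Sanov generators are needed) and a canonical tower of characteristic subgroups rather than one depending on a choice of embedding. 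One cosmetic point to fix in a final write-up: with the usual left-to-right reading of $w=y_1\cdots y_n$ but function composition acting on the left, $\phi(w)$ traces your path $0\to 1\to\cdots\to n$ only if permutations act on the right; otherwise apply your construction to the reversed word (which is also reduced) to certify $\phi(w)\neq\mathrm{id}$. This is a convention issue, not a gap.
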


There is of course a huge literature and knowledge about the structure of $\bbF_\ell$ and we believe many experts would regard this theorem as folk wisdom, but we feel it is useful to sketch one explicit construction.  We note that if $\bbH_n\subset\bbG$ is a nested sequence of finite index normal subgroups with $\cap_n\bbH_n = \{\bdone\}$ and if $\bbK\subset\bbG$ is any subgroup of $\bbG$, then $\bbK\cap\bbH_n$ is a nested sequence of normal subgroups of $\bbK$ with $\cap_n(\bbK\cap\bbH_n) = \{\bdone\}$.  The strategy will be to show that for each $\ell$, $\bbF_\ell$ is isomorphic to a subgroup of $\bbS\bbL(2,\bbZ)$ and then to show that $\bbS\bbL(2,\bbZ)$ has the required family of nested normal subgroups.  So our proof is via a sequence of simple Propositions.

\begin{proposition} \lb{P3.3} For each $\ell>2$, $\bbF_\ell$ is isomorphic to a subgroup of $\bbF_2$.
\end{proposition}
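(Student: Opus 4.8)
The plan is to combine the Nielsen--Schreier theorem with the Schreier index formula, both of which sit comfortably inside the covering-space picture already developed above. Recall that $\bbF_2$ is the fundamental group of the bouquet of two loops (the graph $\wti\calG_2$ in the notation above), whose universal cover is $\calT_4$. The numerical input is the Schreier index formula: a subgroup of index $n$ in a free group of rank $r$ is again free, of rank $1 + n(r-1)$. Taking $r = 2$ and $n = \ell - 1$ yields rank $1 + (\ell-1) = \ell$, so the entire problem reduces to producing one subgroup of $\bbF_2$ of index exactly $\ell - 1$.

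First I would write down such a subgroup explicitly. The cleanest choice is the kernel $H$ of the homomorphism $\varphi \colon \bbF_2 \to \bbZ/(\ell-1)\bbZ$ determined by $\varphi(a) = 1$ and $\varphi(b) = 0$, where $a, b$ are the free generators. Because $\varphi$ is visibly surjective, $H = \ker\varphi$ has index exactly $\ell - 1$. Equivalently, in covering-space terms, $H$ corresponds to the connected $(\ell-1)$-fold cyclic cover of $\wti\calG_2$: arrange $\ell - 1$ vertices $v_0, \dots, v_{\ell-2}$ in a cycle, let the lifts of the $a$-loop run $v_i \to v_{i+1}$ cyclically (indices modulo $\ell-1$), and attach a lift of the $b$-loop as a self-loop at each vertex.

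Next I would read off the rank of $H$. By Nielsen--Schreier, every subgroup of $\bbF_2$ is free, so $H$ is free; and by the Schreier index formula its rank is $1 + (\ell-1)(2-1) = \ell$. In the covering picture this is simply the Euler-characteristic count: the cover just described has $\ell - 1$ vertices and $2(\ell-1)$ edges, so its fundamental group is free of rank $E - V + 1 = \ell$. Either computation gives $H \cong \bbF_\ell$, which is exactly the required embedding of $\bbF_\ell$ as a subgroup of $\bbF_2$.

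There is essentially no genuine obstacle here; the one point deserving care is the connectivity of the chosen cover, equivalently the surjectivity of $\varphi$. Connectivity is precisely what forces the index to be $\ell - 1$ (rather than a proper divisor) and what guarantees that the fundamental group is the full free group of the rank predicted by the Euler characteristic rather than a proper free factor of it. Routing the $a$-lifts around a single cycle makes transitivity of the monodromy, and hence connectivity, immediate, so the proof collapses to these two standard invocations.
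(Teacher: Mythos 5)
Your proof is correct and follows essentially the same route as the paper: both arguments realize $\bbF_\ell$ as the fundamental group of a connected $(\ell-1)$-fold cyclic cover of the figure-eight graph $\calG_2$ (your cover is the kernel of the surjection $\bbF_2\to\bbZ/(\ell-1)\bbZ$ sending $a\mapsto 1$, $b\mapsto 0$, while the paper's circulant graph $\calK_{\ell-1}$ corresponds to the surjection $a\mapsto 1$, $b\mapsto 2$), and both extract the rank $\ell$ from the same Euler-characteristic count $E-V+1$. The only real difference is packaging: you cite Nielsen--Schreier and the Schreier index formula as black boxes, whereas the paper does the spanning-tree count by hand so as to stay self-contained within covering-space theory.
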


\begin{proof} It is remarkable that this algebraic fact about discrete groups will be proven using covering space theory, a subject that arose in complex analysis! For $n\ge 2$, form a graph $\calK_n$ whose vertices are the points in $\bbZ_n=\bbZ/n\bbZ$, the integers mod $n$, and where $m$ is connected by four edges to the points $m\pm 1,m\pm 2$ (for $n\le 4$, one has to describe things more carefully and describe in terms of self-loops for $n=2$ and multiple edges for $n=2,3,4$ but it is still a degree $4$ graph with $n$ points).  A maximal spanning tree obviously has $n-1$ edges so one needs to remove $2n-(n-1)$ edges to get from $\calK_n$ to the tree and thus, the fundamental group of $\calK_n$ is $\bbF_{n+1}$.

On the other hand, as a homogenous degree $4$ graph, $\calK_n$ is a finite cover of $\calG_2$, the graph with a single point and two self loops whose fundamental group is $\bbF_2$ (for example one can get an explicit covering map by taking all vertices of $\calK_n$ to the single vertex of $\calG_2$ and taking the $m,m\pm 1$ edges to one self loop and the $m,m\pm 2$ edges to the other self loop with the $\pm$ edges going in opposite directions).  Taking $n=\ell-1$, the covering map induces an injection on fundamental groups realizing $\bbF_\ell$ as a subgroup of $\bbF_2$, indeed one of index $\ell-1$.
\end{proof}

\begin{remark} One can take any homogenous graph of degree $4$.  For example, if $\calH_2$ is the graph with $2$ vertices and $4$ edges between them, the spanning tree is a single edge and $\bbF_3$ is generated by loops that go from the base point to the other by the spanning tree and return by one of the three others.  The four edges map to $a^{\pm 1}, b^{\pm 1}$ under the induced covering map to $\calG_2$, so we see that $a^2,ab,ab^{-1}$ are free in $\bbF_2$ giving an explicit set of generators.
\end{remark}

\begin{proposition} \lb{P3.4} Each $\bbF_\ell$, $\ell\ge 2$ is isomorphic to a subgroup of $\bbS\bbL(2,\bbZ)$
\end{proposition}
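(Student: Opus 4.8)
The plan is to reduce to the case $\ell = 2$ and then produce an explicit free subgroup of $\bbS\bbL(2,\bbZ)$ of rank two. By Proposition \ref{P3.3}, for every $\ell > 2$ the group $\bbF_\ell$ embeds into $\bbF_2$, so it suffices to embed $\bbF_2$ itself: once $\bbF_2 \hookrightarrow \bbS\bbL(2,\bbZ)$ is established, the case $\ell = 2$ is immediate and for $\ell > 2$ one composes $\bbF_\ell \hookrightarrow \bbF_2 \hookrightarrow \bbS\bbL(2,\bbZ)$, using that a subgroup of a subgroup is a subgroup.

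For the core step I would exhibit the classical Sanov pair
\[
A = \begin{pmatrix} 1 & 2 \\ 0 & 1 \end{pmatrix}, \qquad B = \begin{pmatrix} 1 & 0 \\ 2 & 1 \end{pmatrix}
\]
and prove they generate a free group of rank two. Both lie in $\bbS\bbL(2,\bbZ)$ and have infinite order, with $A^n = \left(\begin{smallmatrix} 1 & 2n \\ 0 & 1\end{smallmatrix}\right)$ and $B^n = \left(\begin{smallmatrix} 1 & 0 \\ 2n & 1\end{smallmatrix}\right)$, so in particular $\langle A\rangle$ and $\langle B\rangle$ are each infinite cyclic.

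To rule out relations I would invoke the ping-pong (table-tennis) lemma applied to the linear action of $\bbS\bbL(2,\bbZ)$ on column vectors in $\bbR^2$. Set
\[
X_A = \{(x,y)^T : \abs{x} > \abs{y}\}, \qquad X_B = \{(x,y)^T : \abs{x} < \abs{y}\},
\]
which are nonempty and disjoint. A one-line estimate gives, for every $n \ne 0$, the inclusions $A^n X_B \subset X_A$ and $B^n X_A \subset X_B$: indeed, if $\abs{y} > \abs{x}$ then $A^n(x,y)^T = (x + 2ny,\, y)^T$ has first coordinate of absolute value at least $2\abs{n}\abs{y} - \abs{x} \ge 2\abs{y} - \abs{x} > \abs{y}$, so the image lies in $X_A$, and symmetrically for $B$ acting on $X_A$. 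The ping-pong lemma then forces $\langle A, B\rangle = \langle A\rangle * \langle B\rangle \cong \bbZ * \bbZ = \bbF_2$, completing the embedding.

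The only place requiring genuine care is the verification of the two ping-pong inclusions, and this reduces to the elementary inequality displayed above; the lemma itself does all the combinatorial work of excluding nontrivial words, so I expect no real obstacle. As a fallback one could instead identify $\langle A, B\rangle$ with (a lift of) the principal congruence subgroup $\Gamma(2) \subset \bbS\bbL(2,\bbZ)$ and cite its known freeness of rank two, but the direct ping-pong argument is cleaner and entirely self-contained.
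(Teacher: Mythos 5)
Your proof is correct and follows the same overall strategy as the paper: reduce to $\ell=2$ via Proposition \ref{P3.3} and then exhibit the Sanov pair $\left(\begin{smallmatrix} 1 & 2 \\ 0 & 1\end{smallmatrix}\right)$, $\left(\begin{smallmatrix} 1 & 0 \\ 2 & 1\end{smallmatrix}\right)$ as free generators. The one place you diverge is the key step: the paper simply cites the freeness of this pair (Sanov's original paper, and Goldberg--Newman for a short proof in English), whereas you prove it from scratch with the ping-pong lemma applied to the cones $\abs{x}>\abs{y}$ and $\abs{x}<\abs{y}$; your inequality $\abs{x+2ny}\ge 2\abs{n}\abs{y}-\abs{x}>\abs{y}$ is exactly right, and since both $\langle A\rangle$ and $\langle B\rangle$ are infinite cyclic the hypotheses of the ping-pong lemma (one factor of order at least $3$) are met. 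So your write-up is self-contained where the paper's is not, at the cost of invoking (or proving) the ping-pong lemma itself; the paper's citation route is shorter but leaves the reader to consult the literature. Both are standard and fully rigorous.
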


\begin{proof}  By the previous proposition, it suffices to prove it for $\ell=2$.  It is well known that the two matrices
$\left(
  \begin{array}{cc}
    1& 2 \\
    0 & 1\\
  \end{array}
\right)$ and
$\left(
  \begin{array}{cc}
    1& 0 \\
    2 & 1\\
  \end{array}
\right)$
are free generators (and generate the group of matrices of the form
$\left(
  \begin{array}{cc}
    4a+1& 2b \\
    2c & 4d+1\\
  \end{array}
\right)$ for arbitrary $a,b,c,d\in\bbZ$).  This group is often called the Sanov group after \cite{Sanov} who first proved the generators are free.  For a simple proof in English, see Goldbeg-Newman \cite{GN}
\end{proof}

The proof of Theorem \ref{T3.2} is clearly completed by

\begin{proposition} \lb{P3.5} $\bbS\bbL(2,\bbZ)$ contains nested sequences of normal subgroups that converge to infinity.
\end{proposition}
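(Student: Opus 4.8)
The plan is to produce the required family explicitly using the principal congruence subgroups of $\bbS\bbL(2,\bbZ)$, which are exactly the kind of normal finite-index subgroups we need. First I would fix, for each positive integer $N$, the reduction-mod-$N$ homomorphism $\varphi_N\colon \bbS\bbL(2,\bbZ)\to \bbS\bbL(2,\bbZ/N\bbZ)$ obtained by reducing each matrix entry modulo $N$, and set $\Gamma(N)=\ker\varphi_N$, i.e.\ the set of $A\in\bbS\bbL(2,\bbZ)$ with $A\equiv\bdone \pmod N$ entrywise. Being a kernel, $\Gamma(N)$ is automatically normal; and since the target $\bbS\bbL(2,\bbZ/N\bbZ)$ is a finite group, $\Gamma(N)$ has finite index. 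This already supplies normal finite-index subgroups for free, so the only remaining work is to arrange nesting and convergence to infinity.

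For nesting I would choose a divisibility chain $N_1\mid N_2\mid N_3\mid\cdots$ with $N_n\to\infty$, the simplest being $N_n=2^n$ (or $N_n=n!$). The elementary point is that $N\mid M$ forces $\Gamma(M)\subset\Gamma(N)$, since $A\equiv\bdone\pmod M$ implies $A\equiv\bdone\pmod N$; hence $\Gamma(N_{n+1})\subset\Gamma(N_n)$ and the family is nested in the decreasing sense appropriate to a tower of covers. For convergence to infinity I must check $\cap_n\Gamma(N_n)=\{\bdone\}$: if $A$ lies in every $\Gamma(N_n)$, then each entry of $A-\bdone$ is an integer divisible by $N_n$ for all $n$, and since $N_n\to\infty$ such an integer must vanish, so $A=\bdone$. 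Equivalently, a fixed $A\ne\bdone$ has some entry of $A-\bdone$ equal to a nonzero integer $k$, whence $A\notin\Gamma(N)$ as soon as $N>\abs{k}$, so $A$ belongs to only finitely many of the $\Gamma(N_n)$; this is precisely condition~K for the nested family.

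Finally I would record how this closes Theorem \ref{T3.2}: by Propositions \ref{P3.3} and \ref{P3.4} there is an embedding $\bbF_\ell\hookrightarrow\bbS\bbL(2,\bbZ)$, and by the intersection observation already noted before Proposition \ref{P3.3}, the subgroups $\bbF_\ell\cap\Gamma(N_n)$ form a nested sequence of finite-index normal subgroups of $\bbF_\ell$ with trivial intersection, hence a tower of homogeneous covers converging to infinity; Theorem \ref{T3.1} then gives convergence of the normalized eigenvalue counting measures to the DOS. As for the main obstacle, there is essentially none of substance here — this is the folk theorem the text refers to, and the congruence subgroups do all the work at once. The single point deserving care is that normality and finite index alone are not enough: one must genuinely exhibit the trivial intersection, and the cleanest route is the reduction-mod-$N$ structure (rather than invoking abstract residual finiteness of $\bbS\bbL(2,\bbZ)$), which makes both the nesting and the triviality of the intersection completely transparent.
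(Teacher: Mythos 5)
Your proposal is correct and is essentially the paper's own proof: the paper also takes the congruence subgroups $H_n=\{A\in\bbS\bbL(2,\bbZ): A\equiv\bdone \bmod 2^n\}$, which are exactly your $\Gamma(2^n)$, with the same nesting and trivial-intersection observations. The only cosmetic difference is that you obtain normality by viewing $\Gamma(N)$ as the kernel of reduction mod $N$ (which also gives finite index for free), whereas the paper verifies normality by a direct matrix computation.
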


\begin{proof} Let $H_n$ be the subgroup of all matrices in $\bbS\bbL(2,\bbZ)$ which are congruent to $\bdone$ mod $2^n$.  This is clearly a decreasing sequence of subgroups whose intersection is $\{\bdone\}$.  Since the set of matrices congruent to zero mod $2^n$ is obviously an ideal in $\bbS\bbL(2,\bbZ)$, if $C\equiv\mathbf{0}, \mod 2^n$ and $B\in\bbS\bbL(2,\bbZ)$, then $B(\bdone+C)B^{-1}-\bdone\equiv\mathbf{0} \mod 2^n$, so $H_n$ is normal.
\end{proof}

%%%%%%%%%%%%%%%%%%%%%%%%%%%%%%%

\end{document}